\newcommand{\R}{\mathbb{R}}
\newcommand{\F}{\mathcal{F}}
\newcommand{\Hh}{\mathcal{H}}
\newcommand{\Sc}{\mathcal{S}}
\newcommand{\Exp}{\mathbb{E}}
\newcommand{\h}{\hat}
\newcommand{\inpr}[3][]{\left\langle#2 \,,\, #3\right\rangle_{#1}}
\numberwithin{equation}{section}
\newtheorem{theorem}{Theorem}[section]
\newtheorem{remark}[theorem]{Remark}
\newtheorem{definition}[theorem]{Definition}
\newtheorem{example}[theorem]{Example}
\title{Weak Solutions of SPDEs in the space of Tempered distributions}
\author{Suprio Bhar}
\address{Suprio Bhar, Department of Mathematics and Statistics, Indian Institute of Technology Kanpur, Kalyanpur, Kanpur - 208016, India.}
\email{suprio@iitk.ac.in}
\author{Barun Sarkar}
\address{Barun Sarkar, Department of Mathematics, Indian Institute of Technology Madras, Chennai - 600036, India.}
\email{barun@iitm.ac.in}
\begin{document}
  
  \keywords{$\Sc^\prime$ valued processes, Girsanov's Theorem}
 \subjclass[2020]{60H10, 60H15}

 \begin{abstract}
 In this article, we construct weak solutions for a class of Stochastic PDEs in the space of tempered distributions via Girsanov's theorem. It is to be noted that our drift and diffusion coefficients $(L,A)$ of the considered Stochastic PDE satisfy a Monotonicity type inequality, rather than Lipschitz conditions. As such, we can not follow the usual infinite dimensional analysis as described in \cite[sections 10.2 and 10.3]{MR3236753}. Instead, we exploit related SDEs to obtain our desired result, and we point out an important observation that the same Novikov condition is used in changing the Brownian motion in both the SDEs and the Stochastic PDEs.
 \end{abstract}

  \maketitle

\section{Introduction}\label{S1:intro}
Fix $T > 0$ and let $(\Omega, \F, \{\F_t\}_{t \in [0, T]}, P)$ be a complete filtered probability space, satisfying the usual conditions. In this article, we study weak and strong solutions of the following stochastic partial differential equation (SPDE) in $\Sc'$, the space of tempered distributions on $\R^d$, for $t\in[0,T]$
 \begin{align}\label{main-spde}
 \begin{split}
 & dX_t = L(X_t)\, dt + A(X_t) \cdot dB_t,\\
 & X_0=\phi, 
 \end{split}
 \end{align}
 where $B = \{B_t\}_{0\leq t\leq T}$ is a given $d$-dimensional standard Brownian motion with respect to the filtration $\{\F_t\}_{0\leq t\leq T}$, $\phi$ is an $\Sc^\prime$ valued deterministic initial condition and $A:=(A_1,\cdots,A_d)$, $L,A_j:\Sc'\to\Sc'$ are nonlinear operators such that for $y\in\Sc'$
 \begin{align}\label{defn-LA1}
 \begin{split}
 & L(y) := \frac{1}{2} \sum_{i,j=1}^d (\sigma(y)\sigma^t(y))_{ij}\,  \partial^2_{ij} y - \sum_{i=1}^d b_i(y)\, \partial_i y,\\
 & A_j(y) := - \sum_{i=1}^d \sigma_{ij}(y)\, \partial_i y,
 \end{split}
 \end{align}
where $\sigma:\Sc^\prime \to \R^{d \times d}, b: \Sc^\prime \to \R^{d}$, with the components denoted by $\sigma_{ij}, b_i, i, j = 1, 2, \cdots, d$ and $\sigma^t$ denotes the transpose of $\sigma$. Descriptions of the topology on $\Sc$, the Schwartz space on $\R^d$ and definitions of the notions of solutions of the SPDEs considered above have been recalled in subsection \ref{S2:topology} and subsection \ref{S2:setup}, respectively. For the notions of weak and strong solutions, we refer to \cite{MR3236753, MR2560625, MR532498} and the references therein.

In \cite{MR3063763}, local strong solutions to the SPDE \eqref{main-spde} were shown to arise from the local strong solutions from certain associated stochastic differential equations (SDEs). We recall this correspondence in subsection \ref{S2:setup}. Note that the same Brownian motion appears in both the equations, the SPDE as well as the SDE.  The correspondence holds, provided the pair of operators $(L, A)$ satisfy the Monotonicity inequality.

Using the correspondence mentioned above, we apply the finite dimensional Girsanov's Theorem to change the drift terms in both the SPDE and the SDE. Consequently, we are able to use the same Novikov's integrability condition in our arguments.  This leads to the existence of a weak solution as well as uniqueness in law for the modified SPDE, with the new Brownian motion arising from the finite dimensional Girsanov's Theorem. Since we are working with Hermite-Sobolev space valued processes (see Section \ref{S2:prelims}), which are driven by a finite dimensional Brownian motion, as a consequence neither we can use infinite-dimensional approach as in \cite[sections 10.2 and 10.3]{MR3236753}, nor the finite-dimensional results as in \cite{MR2001996}. It is to be noted that in \cite{MR3236753}, the noise is Hilbert valued and in \cite{MR2001996}, the process in consideration is finite-dimensional. The main results of this article have been discussed in Section \ref{S3:main-results} and some applications have been mentioned in Section \ref{S4:Applications}. Note that our assumption does not include any Lipschitz continuity of $(L, A)$.

\section{Preliminaries}\label{S2:prelims}

\subsection{Topology on Schwartz space}\label{S2:topology}
Let $\Sc$ denote the space of real valued rapidly decreasing smooth functions on $\R^d$, with the topology given by L. Schwartz (\cite{MR1681462, MR2296978}). Note that its dual is $\Sc^\prime$ (see \cite{MR771478}). Let $\mathbb{Z}^d_+:=\{n=(n_1,\cdots, n_d): \; n_i \text{ non-negative integers}\}$. If $n\in\mathbb{Z}^d_+$, we define $|n|:=n_1+\cdots+n_d$. For $p \in \R$, consider the increasing norms $\|\cdot\|_p$, defined by the inner
products
\begin{equation}
\langle f,g\rangle_p:=\sum_{n\in\mathbb{Z}^d_+}(2|n|+d)^{2p}\langle f,h_n\rangle\langle g,h_n\rangle,\ \ \ f,g\in\Sc.
\end{equation}
In the above equation, $\{h_n: n\in\mathbb{Z}^d_+\}$ is an orthonormal basis for $\mathcal{L}^2(\R^d)$ given by the Hermite functions and $\langle\cdot,\cdot\rangle$ is the usual
inner product in $\mathcal{L}^2(\R^d)$. The Hermite-Sobolev spaces $\Sc_p, p \in \R$ are defined as the completion of $\Sc$ in
$\|\cdot\|_p$. Note that the dual space $\Sc_p^\prime$ is isometrically isomorphic with $\Sc_{-p}$ for $p\geq 0$. The following basic relations hold for the $\Sc_p$ spaces: for $0<q<p$, \[\Sc\subset\Sc_p\subset\Sc_q\subset\mathcal L^2(\R^d) = \Sc_0\subset\Sc_{-q}\subset\Sc_{-p}\subset\Sc^\prime.\]
We also have $\Sc = \bigcap_{p \geq 0}\Sc_p$ and  $\Sc^\prime = \bigcup_{p \geq 0}\Sc_{-p}$.

Consider the derivative maps denoted by $\partial_i:\Sc\to
\Sc$ for $i=1,\cdots,d$. We can extend these maps by duality to
$\partial_i:\Sc' \to \Sc'$ as follows: for $\psi_1 \in
\Sc'$,
\[\inpr{\partial_i \psi_1}{\psi_2}:=-\inpr{\psi_1}{\partial_i \psi_2}, \; \forall \psi_2
\in \Sc.\]
It is well-known that the derivative operators $\partial_i: \Sc_q \to \Sc_{q - \frac{1}{2}}, i = 1, 2, \cdots, d$ and $\partial^2_{ij}: \Sc_q \to \Sc_{q - 1}, i, j = 1, 2, \cdots, d$ are bounded linear operators for all $q \in \R$.

For $x \in \R^d$, let $\tau_x$ denote the translation operator on $\Sc$
defined by
$(\tau_x\psi)(z):=\psi(z-x), \, \forall z \in \R^d, \psi \in \Sc$. This operator can be
extended to $\tau_x:\Sc'\to \Sc'$ by
\[\inpr{\tau_x\psi_1}{\psi_2}:=\inpr{\psi_1}{\tau_{-x}\psi_2},\, \forall \psi_2 \in
\Sc.\]
Note that $\tau_x: \Sc_q \to \Sc_q$ is a bounded linear operator for any $x \in \R^d$ and any $q \in \R$ (see \cite[Theorem 2.1]{MR1999259}).

\subsection{Definitions and literature review}\label{S2:setup}

The initial condition $\phi$ of \eqref{main-spde} is in $\Sc^\prime = \bigcup_{q \geq 0}\Sc_{-q}$. Consequently, there exists $p \geq 0$ such that $\phi \in \Sc_{-p}$. In what follows, we work with this specific $p$ and assume that
\begin{equation}\label{assumption1}
\tag{Assumption 1}
\sigma_{ij}, b_i \in \Sc_p, \forall i, j = 1, 2, \cdots, d.
 \end{equation}
Using the duality between $\Sc_p$ and $\Sc_{-p}$, observe that $\sigma_{ij}$ and $b_i$'s are continuous linear functionals on $\Sc_{-p}$.\\
Let 
 \[B_{-p}(0, r):= \{y\in \Sc_{-p}:\, \|y\|_{-p}\leq r\}.\]
for $r > 0$. Then, for all $r > 0$
\begin{equation}
 C_1(r):= \max_{i,j}\sup_{y \in B_{-p}(0, r)} \{|\sigma_{ij}(y)|^2,\, |b_i(y)|\} < \infty.
 \end{equation}
By construction, $C_1(r)$ is non-decreasing in $r$. Consequently, the operators $L,A_j: \Sc_{-p}\to \Sc_{-p-1}, j = 1, 2, \cdots, d$ are bounded in the following sense:
\begin{equation}\label{L-A-bound}
 \|L(y)\|_{-p-1} \leq \tilde C_1(d, r) \|y\|_{-p}, \quad \|A_j(y)\|_{-p-1} \leq \tilde C_2(d, r) \|y\|_{-p}, \, \forall y \in B_{-p}(0, r)
 \end{equation}
for any $r > 0$. Here, $\tilde C_1(d, r)$ and $\tilde C_2(d, r)$ are some non-negative constants, depending on $d$ and $r$.

Let $\zeta$ be an arbitrary state, treated as an isolated point of $\hat \Sc_{-p} := \Sc_{-p} \cup \{\zeta\}$.

\begin{definition}[Local Strong solution]\label{defn-strngsoln}
A pair $(\{X_t\}_t, \eta)$ is called a local strong solution of \eqref{main-spde}, if the following conditions hold.
 \begin{enumerate}[label=(\roman*)]
     \item $X = \{X_t\}_t$ is an $\hat\Sc_{-p}$-valued continuous adapted process defined on the filtered probability space $(\Omega,\F,\{\F_t\}_{0\leq t\leq T},P)$,

     \item $\eta$ is an $\{\F_t\}_{0\leq t\leq T}$ stopping time with a.s. $X_t = \zeta, \forall \eta < t \leq T$,
     
     \item the following equality holds in $\Sc_{-p-1}$ a.s. for all $0 \leq t < \eta$.
 \begin{equation}\label{defn-strngSOLN11}
 X_t= \phi + \int_0^t L(X_s)\, ds + \int_0^t A(X_s)\cdot dB_s.
 \end{equation}
 \end{enumerate}
 \end{definition}

\begin{remark}
If for some local strong solution $(\{X_t\}_t, \eta)$, the equality \eqref{defn-strngSOLN11} holds a.s. for all $t \in [0, T]$, then we refer to $\{X_t\}_t$ as a (global) strong solution of \eqref{main-spde}.
\end{remark}

 \begin{definition}[Weak  solution]\label{defn-weaksoln} A system $\left((\Omega,\F,\{\F_t\}_{0\leq t\leq T},P), B, X\right)$ is called a weak solution of \eqref{main-spde}, where $B = \{B_t\}_{0\leq t\leq T}$ is a $d$-dimensional standard Brownian motion with respect to the filtration $\{\F_t\}_{0\leq t\leq T}$ and $X = \{X_t\}_{0\leq t\leq T}$ is an $\Sc_{-p}$-valued $\{\F_t\}_{0\leq t\leq T}$-adapted continuous process such that the equality \eqref{defn-strngSOLN11} holds in $\Sc_{-p-1}$ a.s. for all $t \in [0, T]$. 
 \end{definition}

\begin{remark}
If the filtered probability space $(\Omega,\F,\{\F_t\}_{0\leq t\leq T},P)$ is clear from the context, then for notational convenience, we shall write $(X, B)$ to denote the weak solution as mentioned in Definition \ref{defn-weaksoln}.
\end{remark}

In \cite{MR3063763}, the existence and uniqueness of local strong solutions to SPDE \eqref{main-spde} was considered. Consider the following SDE in $\R^d$
 \begin{equation}\label{sde1}
 Z_t = z + \int_0^t \bar\sigma (Z_s)\cdot dB_s + \int_0^t \bar b(Z_s)\, ds,
 \end{equation}
 where, $\bar\sigma = (\bar\sigma_{ij}) : \R^d \to \R^{d \times d}$ and $\bar b = (\bar b_i) :\R^d \to \R^d$ are defined by
 \begin{align}\label{rln-sigb-BRsigb}
 \begin{split}
 & \bar\sigma_{ij}(\rho):= \sigma_{ij}(\tau_\rho\phi),\\
 & \bar b_i(\rho) := b_i(\tau_\rho\phi).
 \end{split}
 \end{align}
for $\rho\in\R^d$. We make the following assumption.
\begin{equation}\label{assumption2}\tag{Assumption 2}
\text{The functions $\bar \sigma$ and $\bar b$ are locally Lipschitz.}
\end{equation}

\begin{theorem}[{\cite[Theorem 3.4 and Remark 3.9]{MR3063763}}]\label{exist-unq-nlspde}
Suppose \ref{assumption1} and \ref{assumption2} hold. Let $(\{Z_t\}_t, \eta)$ be the local strong solution of SDE \eqref{sde1} with the initial condition $z=0$. Then, $(\{X_t\}_t, \eta)$ is a local strong solution of the Stochastic PDE \eqref{main-spde}, where $X_t:=\tau_{Z_t}\phi, \forall t < \eta$.
 \end{theorem}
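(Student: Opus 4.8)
The plan is to derive the SPDE by pushing the finite-dimensional It\^o formula through the smooth map $\rho \mapsto \tau_\rho\phi$, evaluated along the $\R^d$-valued semimartingale $Z$. Writing $X_t := \tau_{Z_t}\phi$, the heart of the matter is the interplay between the spatial translation parameter and the Schwartz derivative: a direct computation on test functions shows that for $\psi\in\Sc$ one has $\partial_{\rho_i}(\tau_\rho\psi) = -\partial_i(\tau_\rho\psi)$, and this relation extends by duality to $\phi\in\Sc_{-p}$. Thus I expect the generator of the translation flow to reproduce exactly the first-order operators appearing in $L$ and $A$, with the defining relations \eqref{rln-sigb-BRsigb} converting the finite-dimensional coefficients $\bar\sigma,\bar b$ back into $\sigma,b$ evaluated at $\tau_{Z_t}\phi$.

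First I would record the regularity of the translation map. Since $\partial_i:\Sc_q\to\Sc_{q-1/2}$ is bounded for every $q$, I would show that $\rho\mapsto\tau_\rho\phi$ is continuously differentiable as a map $\R^d\to\Sc_{-p-1/2}$ with $\partial_{\rho_i}(\tau_\rho\phi) = -\partial_i(\tau_\rho\phi)$, and twice continuously differentiable as a map $\R^d\to\Sc_{-p-1}$ with $\partial_{\rho_i}\partial_{\rho_j}(\tau_\rho\phi) = \partial^2_{ij}(\tau_\rho\phi)$. The loss of one full derivative per two differentiations is precisely what forces the identity \eqref{defn-strngSOLN11} to hold in $\Sc_{-p-1}$ rather than in $\Sc_{-p}$. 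Boundedness of $\tau_\rho$ on each $\Sc_q$ together with strong continuity of the translation group supplies the continuity of these derivatives in $\rho$.

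Next I would apply the Hilbert-space-valued It\^o formula to this $C^2$ map along $Z$, which on $\{t<\eta\}$ yields, in $\Sc_{-p-1}$,
\begin{align*}
\tau_{Z_t}\phi - \phi &= \sum_{i=1}^d\int_0^t \partial_{\rho_i}(\tau_{Z_s}\phi)\,dZ^i_s + \frac{1}{2}\sum_{i,j=1}^d\int_0^t \partial_{\rho_i}\partial_{\rho_j}(\tau_{Z_s}\phi)\,d\langle Z^i,Z^j\rangle_s.
\end{align*}
Substituting $dZ^i_s = \sum_k \bar\sigma_{ik}(Z_s)\,dB^k_s + \bar b_i(Z_s)\,ds$ and $d\langle Z^i,Z^j\rangle_s = (\bar\sigma\bar\sigma^t)_{ij}(Z_s)\,ds$, together with the derivative identities above and \eqref{rln-sigb-BRsigb}, the coefficient of $dB^k_s$ collapses to $-\sum_i\sigma_{ik}(X_s)\,\partial_i X_s = A_k(X_s)$ and the $ds$ term to $-\sum_i b_i(X_s)\,\partial_i X_s + \frac{1}{2}\sum_{i,j}(\sigma\sigma^t)_{ij}(X_s)\,\partial^2_{ij}X_s = L(X_s)$, matching \eqref{defn-LA1}. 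This is exactly \eqref{defn-strngSOLN11}.

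Finally I would verify the bookkeeping required by Definition \ref{defn-strngsoln}: continuity and adaptedness of $X=\tau_Z\phi$ follow from those of $Z$ and the strong continuity of $\rho\mapsto\tau_\rho\phi$ into $\Sc_{-p}$, while setting $X_t:=\zeta$ on $\{t\geq\eta\}$ fulfills condition (ii). The main obstacle I anticipate is the rigorous justification of the It\^o expansion in the weaker space $\Sc_{-p-1}$: namely, confirming that $\rho\mapsto\tau_\rho\phi$ is genuinely $C^2$ into $\Sc_{-p-1}$ with the claimed derivatives, controlling the Taylor remainder uniformly on compacts in $\|\cdot\|_{-p-1}$, and checking that the stochastic and Lebesgue integrals above are well defined as $\Sc_{-p-1}$-valued integrals, for which \ref{assumption1} and the bounds \eqref{L-A-bound} are used. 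A standard localization along $\eta_n := \eta\wedge\inf\{t:\|Z_t\|>n\}$ reduces everything to bounded coefficients and lets one pass to the limit $n\to\infty$.
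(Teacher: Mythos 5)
Your proposal is correct and follows essentially the same route as the paper: the method (used verbatim in the paper's proof of Theorem \ref{gris-tran-inv-spdethm}, and behind the cited source of this statement) is precisely to apply It\^o's formula for the translation operator to $\tau_{Z_t}\phi$, substitute the SDE dynamics, and use \eqref{rln-sigb-BRsigb} to identify the martingale and drift coefficients with $A_k(X_s)$ and $L(X_s)$, exactly as in your coefficient-matching step. The only difference is one of packaging: where you rebuild the translation It\^o formula by hand (via $C^2$ regularity of $\rho\mapsto\tau_\rho\phi$ into $\Sc_{-p-1}$, Taylor remainder control, and the Hilbert-space-valued It\^o formula), the paper simply invokes the ready-made version from \cite[Theorem 2.3]{MR1837298}, which encapsulates precisely the regularity and remainder estimates you flag as the main obstacle.
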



\section{Main Results}\label{S3:main-results}

Under the setup described in Subsection \ref{S2:setup}, we consider situations where the SPDE \eqref{main-spde}
has global strong solutions, i.e. for all time $t \in [0, T]$. Using the structure of $X_t = \tau_{Z_t}\phi$, it is enough to ensure the existence of global strong solutions $\{Z_t\}_t$ for the SDE \eqref{sde1}. Moreover, we require some norm-bounds on $\{X_t\}_t$ uniformly in time. We state the relevant assumptions below.
\begin{equation}\label{assumption3}\tag{Assumption 3}
\begin{split}
&\text{The SDE \eqref{sde1} does not explode in finite time and has a unique strong solution on the time}\\
&\text{interval $[0, T]$.}
\end{split}
\end{equation}
and 
\begin{equation}\label{assumption4}\tag{Assumption 4}
\text{There exists a constant $\lambda = \lambda(\{X_t\}_t) > 0$ such that $\Exp \sup_{t \in [0, T]}  \|X_t\|_{-p-1}^2 \leq \lambda$.}
\end{equation}

\begin{example}\label{assumption4-examples}
For completeness, we mention some examples where the above assumptions hold. For \ref{assumption3}, we refer to \cite{MR1121940, MR0247684, MR1876169}. For \ref{assumption4}, the following special cases may be considered.
\begin{enumerate}[label=(\alph*)]
    \item If $\phi = \delta_x$ for some $x \in \R^d$, then by \cite[Theorem 4.1]{MR2373102}, we have 
    \[\|X_t\|_{-p-1} = \|\delta_{x + Z_t}\|_{-p-1} \leq C_p\]
    where $C_p > 0$ is a constant depending only on $p$, provided $- p -1 < -\frac{d}{4}$ or equivalently, $p > \frac{d}{4} - 1$. \ref{assumption4} follows. Note that, taking $\phi$ as a finite linear combinations of $\delta_x, x \in \R^d$ also works.
    
    \item By \cite[Theorem 2.1]{MR1999259}, there exists a real polynomial $P_k$ of degree $k = 2([|p + 1|] + 1)$, such that 
    \[\|X_t\|_{-p-1} = \|\tau_{Z_t}\phi\|_{-p-1} \leq P_k(|Z_t|) \|\phi\|_{-p-1}.\]
    Without loss of generality, the coefficients of $P_k$ are taken to be non-negative.
    To have \ref{assumption4}, we need to work with those $\{Z_t\}_t$ such that $\Exp \sup_{t \in [0, T]} P_k(|Z_t|) < \infty$.
\end{enumerate}
\end{example}

Under \ref{assumption4}, we have $\{X_t\}_t \in \h\Hh_2$, where $\h\Hh_2$ denotes the space of adapted $\Sc_{-p-1}$-valued continuous stochastic processes $\{\varTheta_t\}_t$ satisfying
 \[ \sup_{0\leq t\leq T} \Exp\, \|\varTheta_t\|_{-p-1}^2<\infty.\]
 Note that $\h\Hh_2$ is a Banach space, with the norm (see \cite[p. 104]{MR2560625})
 \begin{equation}\label{defn-banach-spc-NRM}
 \|\varTheta\|_{\h\Hh_2}:= \left( \sup_{0\leq t\leq T} \Exp\, \|\varTheta_t\|_{-p-1}^2 \right)^{\frac{1}{2}}.
 \end{equation}

\begin{remark}
Markov property of the solutions $\{X_t\}_t$ has been discussed in Section 4 of \cite{MR3063763}.
\end{remark}

Let us consider the Stochastic PDE \eqref{main-spde} and look at the  exponential martingale, following Example 19.9, Chapter 19 of \cite{MR3012668}, viz.
 \begin{equation}\label{exp-mart1}
 M_t := \exp \left(\sum_{j=1}^d \left\{ \int_0^t h^j(s)\, dB^j_s - \frac{1}{2}\int_0^t h^j(s)^2\, ds\right\} \right),
 \end{equation}
where for $s \in [0, T]$,
\begin{equation}\label{dfnhjnw}
h^j(s) := \sqrt{\Exp\|X_s\|^2_{-p-1}},\ \ \forall \  j=1,2,\cdots,d.
\end{equation}
Using \ref{assumption4}, we conclude that the Novikov's condition holds, i.e.
\begin{equation}\label{Novikov-condition}
   \Exp \left[ \exp \left(\frac{1}{2}\int_0^T \sum_{j=1}^d h^j(s)^2\, ds\right) \right] = \Exp \left[ \exp \left(\frac{d}{2}\int_0^T \Exp\|X_s\|^2_{-p-1} \, ds\right) \right]<\infty. 
\end{equation}

 \begin{theorem}\label{grisanov}
 Consider the $\Sc_{-p}$-valued process $\{X_t\}_{0\leq t\leq T}$ satisfying \eqref{main-spde} in $\Sc_{-p-1}$ with
 \begin{equation}\label{grisanov-marting}
 \Exp \left[ \exp \left(\sum_{j=1}^d \left\{ \int_0^T \sqrt{\Exp\|X_s\|^2_{-p-1}}\, dB^j_s - \frac{1}{2}\int_0^T \Exp\|X_s\|^2_{-p-1}\, ds \right\} \right) \right] = 1.
 \end{equation}
 Then the process
 \begin{equation}\label{transformQ-Winer}
 \widehat{B^j_t} = B^j_t - \int_0^t \sqrt{\Exp\|X_s\|^2_{-p-1}}\, ds,\ \ \ t\in[0,T],\, \forall j=1,\cdots,d
 \end{equation}
 is a Brownian motion with respect to $Q$ on the probability space $(\Omega,\F,Q)$, where
 \begin{equation}\label{Q-BM-grisnv}
 dQ(\omega)= \exp \left(\sum_{j=1}^d \left\{ \int_0^T \sqrt{\Exp\|X_s\|^2_{-p-1}}\, dB^j_s - \frac{1}{2}\int_0^T \Exp\|X_s\|^2_{-p-1}\, ds \right\} \right)\, dP(\omega).
 \end{equation}

 \end{theorem}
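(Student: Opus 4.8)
The plan is to recognize this claim as a direct application of the finite-dimensional (multidimensional) Girsanov theorem, with the vector process $(h^1,\ldots,h^d)$ of \eqref{dfnhjnw} playing the role of the shift. The decisive structural observation is that each $h^j(s)=\sqrt{\Exp\|X_s\|^2_{-p-1}}$ is a \emph{deterministic} function of $s$, being an expectation. Hence, despite the $\Sc_{-p}$-valued nature of $\{X_t\}$, the analysis reduces entirely to the classical setting; the infinite-dimensional character of the solution enters only through the scalar quantity $\Exp\|X_s\|^2_{-p-1}$, and the driving noise $B$ is genuinely $d$-dimensional.

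First I would verify the integrability needed to give meaning to $M_t$ from \eqref{exp-mart1}. Since $\{X_t\}_t \in \h\Hh_2$ under \ref{assumption4}, the map $s\mapsto \Exp\|X_s\|^2_{-p-1}$ is bounded on $[0,T]$, so $\int_0^T h^j(s)^2\,ds<\infty$ and each Wiener integral $\int_0^t h^j(s)\,dB^j_s$ is well defined. Consequently $M_t$ is a continuous, strictly positive local martingale with $M_0=1$. Next I would upgrade $M$ to a genuine $P$-martingale: a nonnegative local martingale is a supermartingale, and the hypothesis \eqref{grisanov-marting} states precisely that $\Exp[M_T]=1=\Exp[M_0]$, so equality of the terminal and initial expectations forces $M$ to be a true martingale on $[0,T]$. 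This is exactly where Novikov's condition \eqref{Novikov-condition}, guaranteed by \ref{assumption4}, does its work. Therefore $M_T>0$ with $\Exp[M_T]=1$, and $Q$ defined by $dQ=M_T\,dP$ in \eqref{Q-BM-grisnv} is a probability measure equivalent to $P$.

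Finally, I would invoke Girsanov's theorem to identify $\widehat{B}$ under $Q$. For each $j$, the transformed process $\widehat{B^j_t}=B^j_t-\int_0^t h^j(s)\,ds$ is a continuous $Q$-local martingale; moreover the subtracted drift is of finite variation, so the quadratic covariations are left unchanged, $\langle \widehat{B^i},\widehat{B^j}\rangle_t=\langle B^i,B^j\rangle_t=\delta_{ij}\,t$. By Lévy's characterization, $\widehat{B}=(\widehat{B^1},\ldots,\widehat{B^d})$ is a standard $d$-dimensional Brownian motion on $(\Omega,\F,Q)$, which is the assertion.

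I do not anticipate a serious obstacle. The only substantive point is securing the true (rather than merely local) martingale property of $M$, which the Novikov computation in \eqref{Novikov-condition} already supplies; and because $h$ is deterministic, this martingale property could alternatively be read off directly from the Gaussian moment generating function of the Wiener integrals, without appealing to Novikov at all. The remaining work is the routine verification that $Q$ is a probability measure and the bookkeeping of covariations, both of which are standard once the deterministic nature of $h$ is noted.
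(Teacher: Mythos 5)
Your proposal is correct and follows essentially the same route as the paper: the paper's proof simply observes that $\Exp\|X_t\|^2_{-p-1}$ is finite (hence the shift $h^j$ is a deterministic scalar function) and that $B$ is a genuine $d$-dimensional Brownian motion, and then cites the finite-dimensional Girsanov theorem, which is exactly the reduction you make. Your write-up merely fills in the standard internals of that citation (square-integrability of $h$, the supermartingale-plus-\eqref{grisanov-marting} upgrade of $M$ to a true martingale, and the L\'evy characterization), so there is no substantive difference in approach.
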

 
 \begin{proof}
 Note that, $X_t$ is $\Sc_{-p}$ valued, $\Exp\|X_t\|^2_{-p-1}$ is finite and $\{B_t\}$ is a $d$-dimensional Brownian motion. Therefore the proof of Theorem \ref{grisanov} follows from the finite dimensional proof of Girsanov's theorem for SDEs, see \cite{MR2001996}.
 
 \end{proof}

 Consider the following two equations in $\Sc_{-p-1}$:
 \begin{align}
 & dX_t = L(X_t)\, dt + A(X_t) \cdot dB_t,\ \ \ X_0=\phi;\label{gris-spde11}\\
 & d\widetilde X_t = \left( L(\widetilde X_t)+ \hat L(t,\widetilde X_t)\right)\, dt + A(\widetilde X_t) \cdot dB_t,\ \ \ \widetilde X_0=\phi,\label{gris-spde00}
 \end{align}
 where $L$ and $A$ as in \eqref{defn-LA1} and
 \begin{equation}\label{defn-hatL}
  \hat L(t,y) :=  -  \sum_{j=1}^d h^j(t)\, A_j(y)  =   \sum_{i,j=1}^d  h^j(t)\,  \sigma_{ij}(y)\, \partial_i y,\ \forall y\in\Sc_{-p}.  
 \end{equation}
 Consider the following SDE:
 \begin{equation}\label{sde-grisnv-til}
 \widetilde Z_t^i := z^i +\int_0^t \sum_{j=1}^d \bar\sigma_{ij} (\widetilde Z_s)\, dB^j_s + \int_0^t \left( \bar b^i(\widetilde Z_s) - \sum_{j=1}^d h^j(s)\, \bar\sigma_{ij} (\widetilde Z_s) \right)\, ds,
 \end{equation}
 where,  $\bar\sigma_{ij}, \bar b_i$ are defined as in \eqref{rln-sigb-BRsigb}. 
 Note that, $\bar\sigma_{ij}, \bar b_i : \R\to\R$ and $\sigma_{ij}, b_i : \Sc_{-p}\to\R$.
 
 \begin{theorem}\label{gris-tran-inv-spdethm} Consider the SDE \eqref{sde-grisnv-til} with initial condition $z=0$. Then 
 $\widetilde X_t:=\tau_{\widetilde Z_t}\phi$ is an unique strong solution of SPDE \eqref{gris-spde00}.
 \end{theorem}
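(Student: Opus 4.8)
The plan is to reduce Theorem~\ref{gris-tran-inv-spdethm} to the translation correspondence of Theorem~\ref{exist-unq-nlspde}, after noting that \eqref{sde-grisnv-til} is nothing but the SDE~\eqref{sde1} in which the drift $\bar b_i$ has been replaced by the time-dependent drift $\tilde b_i(s,\rho):=\bar b_i(\rho)-\sum_{j=1}^d h^j(s)\,\bar\sigma_{ij}(\rho)$. Since \ref{assumption4} gives $h^j(s)=\sqrt{\Exp\|X_s\|_{-p-1}^2}\le\sqrt{\lambda}$, each $h^j$ is a bounded measurable function of $s$, so by \ref{assumption2} the perturbed drift $\tilde b_i(s,\cdot)$ is still locally Lipschitz in the space variable, uniformly in $s\in[0,T]$; standard SDE theory then yields a unique local strong solution $\{\widetilde Z_t\}$ up to an explosion time $\tau$. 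To rule out explosion before $T$, I would exploit the measure $Q$ of Theorem~\ref{grisanov}: substituting $dB^j_s=d\widehat{B^j_s}+h^j(s)\,ds$ from \eqref{transformQ-Winer} into \eqref{sde-grisnv-til}, the two terms carrying $h^j$ cancel, so that under $Q$ the process $\{\widetilde Z_t\}$ solves the unperturbed equation $d\widetilde Z^i_t=\sum_j\bar\sigma_{ij}(\widetilde Z_t)\,d\widehat{B^j_t}+\bar b_i(\widetilde Z_t)\,dt$ driven by the $Q$-Brownian motion $\widehat B$. This is an instance of \eqref{sde1}, which by \ref{assumption3} does not explode in finite time; hence $\{\tau\le T\}$ is $Q$-null, and since $Q\sim P$ it is $P$-null as well, so $\{\widetilde Z_t\}$ is a unique global strong solution of \eqref{sde-grisnv-til} on $[0,T]$.

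The core of the argument is then the translation step. Put $\widetilde X_t:=\tau_{\widetilde Z_t}\phi$; since $\tau_x:\Sc_{-p}\to\Sc_{-p}$ is bounded and $x\mapsto\tau_x\phi$ is continuous, $\{\widetilde X_t\}$ is an adapted, continuous, $\Sc_{-p}$-valued process with $\widetilde X_0=\phi$. Applying the It\^o formula to the $\Sc_{-p-1}$-valued map $x\mapsto\tau_x\phi$, exactly as in the proof of Theorem~\ref{exist-unq-nlspde}, and using $\frac{\partial}{\partial x_i}\tau_x\phi=-\partial_i\tau_x\phi$, $\frac{\partial^2}{\partial x_i\partial x_k}\tau_x\phi=\partial^2_{ik}\tau_x\phi$, the bracket $d\langle\widetilde Z^i,\widetilde Z^k\rangle_s=(\bar\sigma\bar\sigma^t)_{ik}(\widetilde Z_s)\,ds$, and the identities $\bar\sigma_{ij}(\widetilde Z_s)=\sigma_{ij}(\widetilde X_s)$, $\bar b_i(\widetilde Z_s)=b_i(\widetilde X_s)$, I would collect terms. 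The second-order contribution together with the $\bar b$ part of the drift assembles into $L(\widetilde X_s)$, the stochastic integral into $A(\widetilde X_s)\cdot dB_s$, and the extra drift $-\sum_j h^j(s)\bar\sigma_{ij}(\widetilde Z_s)$ produces exactly $\sum_{i,j}h^j(s)\,\sigma_{ij}(\widetilde X_s)\,\partial_i\widetilde X_s=\hat L(s,\widetilde X_s)$ by \eqref{defn-hatL}. This establishes the integral form of \eqref{gris-spde00} in $\Sc_{-p-1}$, a.s.\ for every $t\in[0,T]$.

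Uniqueness for \eqref{gris-spde00} transfers from the SDE through the same correspondence: as $\widetilde Z\mapsto\tau_{\widetilde Z}\phi$ is deterministic and the reverse passage from an $\Sc_{-p}$-valued solution of \eqref{gris-spde00} back to a solution of \eqref{sde-grisnv-til} is supplied by the framework of \cite{MR3063763}, pathwise uniqueness of \eqref{sde-grisnv-til} forces uniqueness of the strong solution of \eqref{gris-spde00}. I expect the main obstacle to be the rigorous justification of the infinite-dimensional It\^o formula for $\tau_{\widetilde Z_t}\phi$ in the $\Sc_{-p-1}$-topology---in particular the $C^2$-regularity of $x\mapsto\tau_x\phi$ as an $\Sc_{-p}$-valued map and the boundedness of $\partial_i,\partial^2_{ik}$ between the relevant Hermite--Sobolev spaces---though this is already available from \cite{MR3063763}; the genuinely new inputs are only the bookkeeping identifying the perturbation with $\hat L$ and the Girsanov argument for non-explosion.
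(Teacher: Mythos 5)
Your proposal is correct and follows essentially the same route as the paper's proof: It\^o's formula for the translation operator applied to $\tau_{\widetilde Z_t}\phi$, substitution of the SDE \eqref{sde-grisnv-til} together with the identities $\bar\sigma_{ij}(\widetilde Z_s)=\sigma_{ij}\big(\tau_{\widetilde Z_s}\phi\big)$, $\bar b_i(\widetilde Z_s)=b_i\big(\tau_{\widetilde Z_s}\phi\big)$ so that the perturbation assembles into $\hat L$, and uniqueness inherited from the locally Lipschitz SDE. Your additional Girsanov cancellation argument for non-explosion of $\widetilde Z$ is a sensible supplement that the paper leaves implicit here (it essentially reappears in the proof of Theorem \ref{final-girsanov}), but it does not change the approach.
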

 
 \begin{proof}
  
 Applying It\^o's formula for the translation operator (\cite[Theorem 2.3]{MR1837298}), we have a.s.
 \begin{align*}
 & \tau_{\widetilde Z_t}\phi \\
 & = \phi - \sum_{i=1}^d \int_0^t \partial_i\,  \tau_{\widetilde Z_s}\phi\, d \widetilde Z^i_s + \frac{1}{2} \sum_{i,j=1}^d \int_0^t \partial^2_{ij}\,  \tau_{\widetilde Z_s}\phi\, d \left[\widetilde Z^i,\widetilde Z^j \right]_s \\
 & = \phi - \sum_{i=1}^d \int_0^t \partial_i\,  \tau_{\widetilde Z_s}\phi\, \left( \sum_{j=1}^d \bar\sigma_{ij} (\widetilde Z_s) dB^j_s \right) \\
 & \quad - \sum_{i=1}^d \int_0^t \partial_i\,  \tau_{\widetilde Z_s}\phi\, \left( \bar b^i(\widetilde Z_s) -   \sum_{j=1}^d h^j(s)\, \bar\sigma_{ij} (\widetilde Z_s) \right)\, ds \\
 & \quad + \frac{1}{2} \sum_{i,j=1}^d \int_0^t \partial^2_{ij}\,  \tau_{\widetilde Z_s}\phi\, \left( \bar\sigma (\widetilde Z_s)\, \bar\sigma^t (\widetilde Z_s)\right)_{ij} ds \\
 & = \phi - \sum_{i=1}^d \int_0^t \partial_i\,  \tau_{\widetilde Z_s}\phi\, \left( \sum_{j=1}^d \sigma_{ij} \Big(\tau_{\widetilde Z_s}\phi\Big)\,  dB^j_s \right) \\
 & \quad - \sum_{i=1}^d \int_0^t \partial_i\,  \tau_{\widetilde Z_s}\phi\, \left( b^i \Big(\tau_{\widetilde Z_s}\phi\Big) -   \sum_{j=1}^d h^j(s)\,  \sigma_{ij}  \Big(\tau_{\widetilde Z_s}\phi\Big) \right) \, ds \\
 & \quad + \frac{1}{2} \sum_{i,j=1}^d \int_0^t \partial^2_{ij}\,  \tau_{\widetilde Z_s}\phi\, \left(\sigma \sigma^t \right)_{ij} \Big(\tau_{\widetilde Z_s}\phi\Big)\, ds.
 \end{align*}
 Therefore, $\widetilde X_t:=\tau_{\widetilde Z_t}\phi$ is a solution of SPDE \eqref{gris-spde00}. The uniqueness of $\{\widetilde X_t\}_{t \in [0, T]}$ as a solution to \eqref{gris-spde00} follows from uniqueness of the SDE for $\{\widetilde Z_t\}_t$, as $\bar\sigma_{ij},\, \bar b_i$ are locally Lipschitz. 
 \end{proof}

\begin{theorem}\label{final-girsanov}
Consider $\hat L$ as in \eqref{defn-hatL}, the probability measure $Q$ as in \eqref{Q-BM-grisnv} and the $Q$-Brownian motion $\widehat B$ as in \eqref{transformQ-Winer}. Note that the Novikov condition \eqref{Novikov-condition} holds. Then the $\Sc_{-p}$ valued process $\{\widetilde X_t\}$ as in \eqref{gris-spde00}) is a solution to
\begin{equation}\label{SPDE-new-BM}
d\widetilde X_t = L(\widetilde X_t)\, dt + A(\widetilde X_t) \cdot d \widehat B_t,\ \ \ \widetilde X_0=\phi
\end{equation}
and has the same law under $Q$ as $\{X_t\}_t$ in \eqref{gris-spde11} under $P$.

\end{theorem}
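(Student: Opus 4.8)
The plan is to prove the two assertions separately: first that $\{\widetilde X_t\}$ solves the clean equation \eqref{SPDE-new-BM} driven by $\widehat B$, and then that its $Q$-law coincides with the $P$-law of $\{X_t\}$, which I would obtain by descending to the level of the associated finite-dimensional SDEs, where uniqueness in law is available.

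For the first assertion I would argue purely algebraically. Starting from \eqref{gris-spde00} and recalling from \eqref{defn-hatL} that $\hat L(t,y) = -\sum_{j=1}^d h^j(t)\, A_j(y)$, I substitute the defining relation $dB^j_t = d\widehat{B^j_t} + h^j(t)\, dt$ coming from \eqref{transformQ-Winer} into the stochastic integral term $A(\widetilde X_t)\cdot dB_t = \sum_{j=1}^d A_j(\widetilde X_t)\, dB^j_t$. The extra drift $\sum_{j=1}^d h^j(t)\, A_j(\widetilde X_t)\, dt$ produced by this substitution cancels exactly the term $\hat L(t,\widetilde X_t)\, dt = -\sum_{j=1}^d h^j(t)\, A_j(\widetilde X_t)\, dt$ already present in \eqref{gris-spde00}, leaving $d\widetilde X_t = L(\widetilde X_t)\, dt + A(\widetilde X_t)\cdot d\widehat B_t$, which is \eqref{SPDE-new-BM}. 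Since each $h^j$ is deterministic and $\widehat B$ is a $Q$-Brownian motion, this rewriting of the $\Sc_{-p-1}$-valued integrals is legitimate; this is precisely the cancellation underlying the remark that a single Novikov condition serves both equations.

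For the second assertion I would exploit the correspondences $X_t = \tau_{Z_t}\phi$ and $\widetilde X_t = \tau_{\widetilde Z_t}\phi$ supplied by Theorems \ref{exist-unq-nlspde} and \ref{gris-tran-inv-spdethm}. Carrying out the identical substitution $dB^j_t = d\widehat{B^j_t} + h^j(t)\, dt$ in the SDE \eqref{sde-grisnv-til} for $\widetilde Z$, the drift $-\sum_{j=1}^d h^j(s)\, \bar\sigma_{ij}(\widetilde Z_s)\, ds$ is absorbed, so that $(\widetilde Z, \widehat B)$ on $(\Omega,\F,Q)$ is a weak solution of the original SDE \eqref{sde1} with $z=0$ — exactly the equation that $(Z,B)$ solves on $(\Omega,\F,P)$. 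Under \ref{assumption2} and \ref{assumption3} the coefficients $\bar\sigma,\bar b$ are locally Lipschitz and the SDE is nonexploding, so pathwise uniqueness holds and, by the Yamada--Watanabe theorem, uniqueness in law holds. Hence the law of $Z$ under $P$ on $C([0,T];\R^d)$ equals that of $\widetilde Z$ under $Q$.

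Finally, I would transfer this equality of laws to the $\Sc_{-p}$-valued processes by pushing forward under the path-level map $\Phi$ sending $(\rho_t)_t$ to $(\tau_{\rho_t}\phi)_t$. Since $\rho \mapsto \tau_\rho\phi$ is continuous from $\R^d$ into $\Sc_{-p}$ (strong continuity of the translation group on the Hermite--Sobolev space, cf. \cite{MR1999259}), the map $\Phi$ is measurable between the respective path spaces, and $\{X_t\}$, $\{\widetilde X_t\}$ are the images of $Z$, $\widetilde Z$ under $\Phi$. Equality of the laws of $Z$ and $\widetilde Z$ thus yields equality of the laws of $\{X_t\}$ under $P$ and $\{\widetilde X_t\}$ under $Q$, completing the proof. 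The main obstacle is this second assertion: one must check that the same deterministic $h^j$ simultaneously linearizes both the SDE and the SPDE under the common Girsanov change, and that uniqueness in law at the finite-dimensional level can be lifted through the translation map to the distribution-valued processes.
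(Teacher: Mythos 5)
Your proposal is correct and follows essentially the same route as the paper: both reduce to the finite-dimensional SDE level, observe that the Girsanov substitution $dB^j_t = d\widehat{B^j_t} + h^j(t)\,dt$ turns the SDE for $\widetilde Z$ into the original SDE \eqref{sde1} under $Q$, invoke uniqueness in law for that SDE (the paper cites \cite[Theorem 8.6.5]{MR2001996}, you derive it from pathwise uniqueness via Yamada--Watanabe), and then transfer the equality of laws to the $\Sc_{-p}$-valued processes through the translation map $\rho \mapsto \tau_\rho\phi$. Your write-up merely makes explicit two steps the paper leaves implicit --- the drift cancellation at the SPDE level and the measurability of the path-level pushforward --- so it is a more detailed rendering of the same argument.
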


\begin{proof}
$\{\widetilde Z_t\}_t$ satisfies the SDE 
\begin{equation}\label{SDE-Z-new-BM}
\widetilde Z_t^i = z^i +\int_0^t \sum_{j=1}^d \bar\sigma_{ij} (\widetilde Z_s)\, d \widehat B^j_s + \int_0^t \bar b^i(\widetilde Z_s)\, ds,\, i = 1, \cdots, d
\end{equation}
under $Q$ (see \eqref{sde-grisnv-til}). Hence, its law under $Q$ is the same as that of $\{Z_t\}_t$ (under $P$) satisfying
\begin{equation}
Z_t^i = z^i +\int_0^t \sum_{j=1}^d \bar\sigma_{ij} (Z_s)\, d B^j_s + \int_0^t \bar b^i(Z_s)\, ds,\, i = 1, \cdots, d
\end{equation}
under $P$ (\cite[Theorem 8.6.5]{MR2001996}). Since $P$ and $Q$ are equivalent probability measures and $ X_t:=\tau_{ Z_t}\phi$ $P$-a.s., $\widetilde X_t:=\tau_{\widetilde Z_t}\phi$ $Q$-a.s., we have the result.
\end{proof}
 
\begin{remark}\label{final-girsanov21}
Under the correspondence between the SPDE \eqref{main-spde} and the SDE \eqref{sde1}, the same Brownian motion appears in both the equations. We are, therefore, able to use the finite dimensional Girsanov theorem in our arguments and the new Brownian motion appears again in both the equations \eqref{SPDE-new-BM} and \eqref{SDE-Z-new-BM}. It is noteworthy that the same Novikov condition is used in changing the Brownian motion for the Stochastic PDEs, as well as the SDEs. Note that the condition is in terms of the solutions of the SPDE \eqref{main-spde}.
\end{remark} 

 \section{Applications}\label{S4:Applications}

In this section, we apply our main results, Theorems \ref{grisanov}, \ref{gris-tran-inv-spdethm} and \ref{final-girsanov} in the following two examples,  to construct weak solutions. Though the examples are described in 1-dimension for simplicity, they can be extended to any general $d$-dimensions in a similar fashion. 

  \begin{example}\label{example1}
Consider $Z_t = B_t, \forall t \in [0, T]$. This process $\{Z_t\}_t$ can be thought of as the solution to the following SDE
  \begin{equation}\label{sde-example1}
  dZ_t=dB_t,\ \ Z_0=0,\ \text{for}\ t\in[0,T].
  \end{equation}
 Take $X_t:=\delta_{B_t}=\tau_{B_t}\delta_0$, is the solution of the following SPDE in $\Sc'$
  \begin{equation}\label{sPde-example1}
  \delta_{B_t}= \delta_0 - \int_0^t\partial\delta_{B_s}\,dB_s + \frac{1}{2}\int_0^t\partial^2\delta_{B_s}\, ds.
  \end{equation}
   For any $p>\frac{d}{4}$, $\sup_{t \in [0, T]}\|\delta_{B_t}\|_{-p}\leq C < \infty$, for some constant $C = C(p, d) > 0$ see \cite[Theorem 4.1]{MR2373102} (also see the comments in Example \ref{assumption4-examples} above). Then, $\delta_{B_t}$ is $\Sc_{-p}$-valued, for $p>\frac{d}{4}$, whereas equation \eqref{sPde-example1} holds in $\Sc_{-p-1}$ and our Novikov condition \eqref{Novikov-condition} will be
  \begin{equation}\label{exmplnovkov1}
  \Exp \left[ \exp \left(\frac{1}{2}\int_0^T \Exp\, \|\delta_{B_s}\|^2_{-p-1} \, ds\right) \right]\leq \Exp \left[ \exp \left(\frac{1}{2}\int_0^T \Exp\, \|\delta_{B_s}\|^2_{-p} \, ds\right) \right] <\infty.
  \end{equation}
 Note that $h(t):= \sqrt{\Exp\, \|\delta_{B_t}\|^2_{-p-1}}$ (see \eqref{dfnhjnw}) and by \eqref{transformQ-Winer}, the new Brownian motion is given by
  \begin{equation}\label{newBM-exmp1}
  \widehat B_t = B_t - \int_0^t \sqrt{\Exp\|\delta_{B_s}\|^2_{-p-1}}\, ds.
  \end{equation}
  Note that, \eqref{exmplnovkov1} is a  sufficient condition for \eqref{grisanov-marting} to hold, see \cite[Proposition 10.17]{MR3236753}. Now, consider the SDE:
  \begin{equation}\label{sde2-example1}
  d\widetilde Z_t = d\widehat B_t + \sqrt{\Exp\|\delta_{B_t}\|^2_{-p-1}}\, dt, \ \ Z_0=0,
  \end{equation}
  Now, by It\^o's formula for the translation operator, as applied in Theorem \ref{gris-tran-inv-spdethm}
\begin{align}\label{spde2-example1}
\begin{split}
\widetilde X_t:=\tau_{\widetilde Z_t}\delta_0 & = \delta_{\widetilde Z_t} \\
& = \delta_0 - \int_0^t \partial \delta_{\widetilde Z_s}\, d\widetilde Z_s + \frac{1}{2}  \int_0^t \partial^2\,  \delta_{\widetilde Z_s} \, ds\\
& = \delta_0 - \int_0^t \partial \delta_{\widetilde Z_s}\, \left( d\widehat B_s + \sqrt{\Exp\|\delta_{B_s}\|^2_{-p-1}}\, ds\right) + \frac{1}{2}  \int_0^t \partial^2\,  \delta_{\widetilde Z_s} \, ds \\
& = \delta_0  + \int_0^t \left( \frac{1}{2} \partial^2\, \delta_{\widetilde Z_s} - \sqrt{\Exp\|\delta_{B_s}\|^2_{-p-1}}\, \partial \delta_{\widetilde Z_s} \right)\, ds - \int_0^t \partial \delta_{\widetilde Z_s}\, d\widehat B_s.
\end{split}
\end{align}
Observe that, $(\widetilde Z,\widehat B)$ from \eqref{sde2-example1} is a weak solution of \eqref{sde-example1} and $(\widetilde X,\widehat B)$ from \eqref{spde2-example1} is a weak solution of \eqref{sPde-example1} and the solutions are weakly unique in the following sense:
\[\mathcal{L}(Z)= \mathcal{L}(\widetilde Z),\ \ \text{and}\ \ \mathcal{L}(X)= \mathcal{L}(\widetilde X),\]
where $\mathcal{L}(\cdot)$ denotes the law of a stochastic process. Note that, the same Novikov condition \eqref{exmplnovkov1} and new Brownian motion \eqref{newBM-exmp1} are used to construct the weak solutions of SDEs and SPDEs. 

\end{example}

\begin{example}
Consider $Z_t = B_t^2, \forall t \in [0, T]$. This $\{Z_t\}_t$ can be thought of as the solution to the following SDE
  \begin{equation}\label{sde-example12}
  dZ_t=d\left(B_t^2\right),\ \ Z_0=0,\ \text{for}\ t\in[0,T].
  \end{equation}
Applying It\^o's formula, we have
\begin{equation}\label{Zt1st2ndexmpl}
dZ_t=d\left(B_t^2\right) = 2B_t\, dB_t+ dt.
\end{equation}
Applying It\^o's formula for the translation operator (\cite[Theorem 2.3]{MR1837298}), we have a.s.
\begin{align}\label{spdetrnsfrm2ndexmp1}
\begin{split}
X_t:= \tau_{Z_t}\delta_0 & = \delta_{B_t^2} \\
& = \delta_0 - \int_0^t\partial\delta_{B_s^2}\, dZ_s + \frac{1}{2}\int_0^t\partial^2\delta_{B_s^2}\, d[Z,Z]_s \\
& = \delta_0 - \int_0^t\partial\delta_{B_s^2}\, \left( 2B_s\, dB_s+ ds \right) + \int_0^t\partial^2\delta_{B_s^2}\, 2B_s^2\, ds \\
& = \delta_0 + \int_0^t \left( 2B_s^2\,  \partial^2\delta_{B_s^2} - \partial\delta_{B_s^2} \right)\, ds - \int_0^t 2B_s\, \partial\delta_{B_s^2}\, dB_s
\end{split}
\end{align}
 Similar to Example \ref{example1}, $\delta_{B_t^2}$ is $\Sc_{-p}$-valued, for $p>\frac{d}{4}$, whereas equation \eqref{spdetrnsfrm2ndexmp1} holds in $\Sc_{-p-1}$ and our Novikov condition of \eqref{Novikov-condition} will be 
\begin{equation}\label{novi-example2}
\Exp \left[ \exp \left(\frac{1}{2}\int_0^T \Exp\, \|\delta_{B_s^2}\|^2_{-p-1} \, ds\right) \right] \leq \Exp \left[ \exp \left(\frac{1}{2}\int_0^T \Exp\, \|\delta_{B_s^2}\|^2_{-p} \, ds\right) \right] <\infty,
\end{equation}
Here, $h(t):= \sqrt{\Exp\, \|\delta_{B_t^2}\|^2_{-p-1}}$ (see \eqref{dfnhjnw}) and by \eqref{transformQ-Winer}, the new Brownian motion is given by
  \begin{equation}\label{2exmpltrnsfrmBt}
  \widehat B_t = B_t - \int_0^t \sqrt{\Exp\|\delta_{B_s^2}\|^2_{-p-1}}\, ds.
  \end{equation}
  From \eqref{Zt1st2ndexmpl}, substituting $dB_t$ of \eqref{2exmpltrnsfrmBt}, we obtain 
 \begin{align}\label{ztild2ndexmpl22}
 \begin{split}
 d\widetilde Z_t & = 2B_t \left( d\widehat B_t + \sqrt{\Exp\|\delta_{B_t^2}\|^2_{-p-1}}\, dt\right) + dt \\
 & = 2B_t\, d\widehat B_t + \left( 2B_t  \sqrt{\Exp\|\delta_{B_t^2}\|^2_{-p-1}} +1  \right)\, dt.
 \end{split}
 \end{align}
 Now, by It\^o's formula for the translation operator
 \begin{align}\label{fnltrnsfrmweksoln2nd}
\begin{split}
& \widetilde X_t:= \delta_{\widetilde Z_t} \\
& = \delta_0 - \int_0^t \partial \delta_{\widetilde Z_s}\, d\widetilde Z_s + \frac{1}{2}  \int_0^t \partial^2\,  \delta_{\widetilde Z_s} \, d[\widetilde Z, \widetilde Z]_s\\
& = \delta_0 - \int_0^t \partial \delta_{\widetilde Z_s} \left\{ 2B_s\, d\widehat B_s + \left( 2B_s  \sqrt{\Exp\|\delta_{B_s^2}\|^2_{-p-1}} +1  \right)\, ds\right\} \\
& \quad + \int_0^t  2B_s^2\,  \partial^2\,  \delta_{\widetilde Z_s} \, ds \\
& = \delta_0 + \int_0^t \left\{ 2B_s^2\,  \partial^2\,  \delta_{\widetilde Z_s} 
 - \left( 2B_s  \sqrt{\Exp\|\delta_{B_s^2}\|^2_{-p-1}} +1  \right)\partial \delta_{\widetilde Z_s}  \right\}\, ds \\
 & \quad - \int_0^t 2B_s\, \partial \delta_{\widetilde Z_s}\, d\widehat B_s.
\end{split}
 \end{align}
Similar to \ref{example1}, $(\widetilde Z,\widehat B)$ from \eqref{ztild2ndexmpl22} is a weak solution of \eqref{sde-example12} and $(\widetilde X,\widehat B)$ from \eqref{fnltrnsfrmweksoln2nd} is a weak solution of \eqref{spdetrnsfrm2ndexmp1} and the solutions   are weakly unique as they are equal in law, i.e.
\[\mathcal{L}(Z)= \mathcal{L}(\widetilde Z),\ \ \text{and}\ \ \mathcal{L}(X)= \mathcal{L}(\widetilde X).\]
Here also, the same Novikov condition \eqref{novi-example2} and new Brownian motion \eqref{2exmpltrnsfrmBt} are used to construct the weak solutions of SDEs and SPDEs.

\end{example}

\textbf{Acknowledgement:} Suprio Bhar was partially supported by the INSPIRE Faculty Award DST/INSPIRE/04/
2017/002835 (Department of Science and Technology, Government of India). Barun Sarkar acknowledges the support of SERB project -  SRG/2022/000991, Government of India.

 \bibliographystyle{amsplain}
 \bibliography{references}
 
 \end{document}